\documentclass[10pt]{IEEEtran}
%
\IEEEoverridecommandlockouts
\overrideIEEEmargins
\usepackage{graphicx}
\usepackage{amsmath}
\usepackage{amsfonts}
\DeclareMathOperator{\rank}{rank}
\DeclareMathOperator{\eig}{eig}
\DeclareMathOperator{\Tr}{Tr}
\DeclareMathOperator{\diag}{diag}
\DeclareMathOperator{\sgn}{sgn}
\newtheorem{theorem}{Theorem}

\newtheorem{lemma}{Lemma}

\begin{document}

\title{\LARGE \bf Input Design for System Identification via Convex Relaxation}
\author{Ian R. Manchester\\ \ \\Electrical Engineering and Computer Science\\Massachusetts Institute of Technology\\irm@mit.edu}

\maketitle

\begin{abstract}
This paper proposes a new framework for the optimization of excitation inputs for system identification. The optimization problem considered is to maximize a reduced Fisher information matrix in any of the classical D-, E-, or A-optimal senses. In contrast to the majority of published work on this topic, we consider the problem in the time domain and subject to constraints on the amplitude of the input signal. This optimization problem is nonconvex. The main result of the paper is a convex relaxation that
gives an upper bound accurate to within $2/\pi$ of the true maximum. A randomized algorithm is presented for finding a feasible solution which, in a certain sense is expected to be at least $2/\pi$ as informative as the globally optimal input signal. In the case of a single constraint on input power, the proposed approach recovers the true global optimum exactly. Extensions to situations with both power and amplitude constraints on both inputs and outputs are given. A simple simulation example illustrates the technique.
\end{abstract}

\section{Introduction}

System identification is the process of computing a compact mathematical model of a real-world system based on experimental input-output data. The quality of the model so identified can depend a great deal on the choice of excitation input. In many practical applications it is natural to seek to extract as much relevant information from the system as possible in minimal time, subject to certain experimental constraints.

Over several decades, a large body of literature has developed on the topic of {\em optimal input design} (see, e.g., \cite{goodwin_payne, LjungBook, Jansson05, bombois,  Rojas07, Rivera09} and references therein). Essentially the same problem is studied in the communications literature for finding test signals for channel estimation (see, e.g., \cite{Tellambura99,Li02, Fragouli03} and many others). Most channel estimation systems assume quite simple dynamic models -- either a static input-output map or an FIR filter -- however recently more dynamic models have been considered \cite{Zhang05}.

The bulk of input design methods for dynamic systems are based on the recognition that, for a linear system, the Fisher information matrix is an affine function of the input power spectrum. Imposing power constraints via Parseval's theorem, and optimizing over an affine parametrization of the input spectrum, the optimization can be posed as a semidefinite program, for which efficient computational tools are readily available \cite{Jansson05}.

In the frequency domain it is natural to consider signal power constraints. However, in many practical cases, the real constraint is the amplitude of the excitation input, not its power. In industrial processes, amplitude constraints are common, as evidenced by the success of model model predictive control. Furthermore, in the emerging area of biomedical system identification safety limits are often given as amplitude constraints (see, e.g. \cite{Kennet10, CSF_CDC}); in communication channel estimation, often a binary signal is desired. The relationship between signal phases and peak amplitude is highly non-linear and non-smooth, making optimization under amplitude constraints computationally challenging in the frequency domain. In previous work we have applied a P\'olya-like algorithm to this problem, solving first a convex optimization with power constraints followed by a sequence of smooth nonlinear optimizations \cite{Manchester09}. 

In the time domain, amplitude constraints appear naturally, and one can study linear time-varying systems and time-varying constraints, as may be appropriate to estimate intrinsic parameters of a nonlinear system via small deviations about a changing operating point. However, the resulting optimization problem is highly nonconvex: even for a system with one parameter to identify, the optimization for an input signal of length $n$ will have $2^n$ local optima. Recent work has suggested using a frequency-domain power-constrained optimization as an initial guess for a local BMI optimization algorithm \cite{Suzuki07}. Others have suggested a minimizing one-step-ahead parameter variance \cite{Lacy03a}, or optimizing the transition probabilities of a markov process for the input \cite{Brighenti09}. In this paper, we show that techniques from semidefinite relaxation of non-convex quadratic programs can be extended to the time-domain experiment design problem to find approximate solutions with a high degree of efficiency. 

\subsection{Semidefinite Relaxations of Nonconvex Quadratic Programs}

As shall be seen, the problem of maximizing an information matrix in the time domain has a structure similar to certain nonconvex quadratic programming problems. Such problems are in general NP-Hard to solve exactly, however it has been found that specific semidefinite relaxations can give upper bounds on the objective and lead to efficient randomized algorithms to find feasible suboptimal solutions with a high degree of accuracy. The breakthrough result of Goemans and Williamson \cite{Goemans95} for finding the maximum cut of a graph to within approximately 0.87 of its true optimum led to many more applications in combinatorial optimization, signal processing, operator theory, and systems analysis \cite{Wolkowicz00,Luo10, Meg01}. An essential tool in this paper will be Nesterov's extension to maximization of a positive-definite quadratic form over a hypercube with an accuracy ratio of $2/\pi$ \cite{Nesterov98, Wolkowicz00}. This family of methods can be variously interpreted as either a simple relaxation of the feasible-set, the dual of the Lagrangian dual, or optimization of the covariance of a random variable \cite{Wolkowicz00, Meg01}.

\subsection{Paper Structure}
The Structure of the paper is as follows: in Section \ref{sec:problem} we introduce the problem statement mathematically; in Section \ref{sec:convex1} we give a convex relaxation of the input design problem with input amplitude constraints, and the main theoretical results of the paper; in Section \ref{sec:convex2} we extend the solution to more general contraint types; in Section \ref{sec:examples} we give some illustrative examples; Section \ref{sec:conc} contains some brief conclusions and future directions. 

\subsection{Notation}
$S_n^+$ the cone of symmetric $n\times n$ positive-semidefinite matrices. For symmetric matrices, $X\ge Y$ means $X-Y\in S_n^+$. 
The function $\sgn(\cdot):\mathbb{R}^n\rightarrow\{-1,0,1\}^n$ computes a vector, each element of which is the sign of the corresponding element of the argument vector. The symbol $\mathbb{E}$ denotes the expectation operator. 
We make the following definition: a function $v: S_n^+ \rightarrow \mathbb{R}$ is denoted {\em nonnegative-concave} if $v(X)\ge 0 \ \forall \, X \in S_n^+$ and $v(\alpha X + (1-\alpha)Y)\ge \alpha v(X) + (1-\alpha)v(Y) \ \forall \, X,Y\in S_n^+, \, \alpha \in [0,1]$.

\section{Optimal Experiment Design}\label{sec:problem}

In a statistical experiment design, the amount of information about parameters $\theta$ contained in the observations $y$ from an experiment is measured by the Fisher information matrix $I(\theta)$, which depends on the experimental conditions. The Fisher information matrix is defined as
\begin{equation}\notag
I_\theta :=\mathbb E \left[\frac{\partial\log p(y|\theta)}{\partial\theta} \frac{\partial\log p(y|\theta)}{\partial\theta}'\right]
\end{equation}
where $p(y|\theta)$, considered as a function of $\theta$ with fixed observations $y$, is the likelihood function, and the derivitives are taken at the true value of $\theta$. The inverse $I(\theta)^{-1}$ is a lower bound on the achievable covariance matrix of an unbiased estimator \cite{Federov72}.

We consider dynamic system estimation problems, where from observations of finite-length sequences $u(t)$ and $y(t)$ one must estimate a system:
\[
y(t) = \mathcal G_\theta u(t) + \mathcal H_\theta e(t)
\]
where $\mathcal G_\theta$ and $\mathcal H_\theta$ are unknown linear maps parametrized by $\theta$ and $e(t)$ is a Gaussian white noise sequence. Note that this framework naturally allows multi-input multi-output systems via stacking inputs and outputs, as well as time-varying linear systems.

For simplicity we address here the particular case where $\mathcal G_\theta = \mathcal G(q)$ and $\mathcal H_\theta = \mathcal H(q)$ are single-input single-output finite-dimensional LTI systems given as rational functions of the shift operator $q$. The more general cases are a straightforward extension of our method. With this structure, the log-likelihood function is given by:
\begin{equation}\notag
\log p(y|\theta) = -\frac{n}{2}\log(2\pi)-\frac{n}{2}\log(\sigma_e)-\frac{1}{2\sigma_e}\sum_{t=1}^n\varepsilon(t)^2
\end{equation}
where
\begin{equation}\notag
\varepsilon(t) := \mathcal{H}_\theta(q)^{-1}[y(t)-\mathcal{G}_\theta(q)u(t)].
\end{equation}
Assuming an open-loop experiment, zero correlation between $u(t)$ and $e(t)$, and independently parametrized system and disturbance model -- i.e. $\theta = [\theta_G \ \theta_H]'$ -- then the information matrix can be decomposed as
\begin{equation}\notag
I_\theta=\begin{bmatrix}
\bar I_\theta(u)&0\\
0&\star\end{bmatrix}
\end{equation}
where $\bar I_\theta(u)$ is the block corresponding to $\theta_G$ and depends on the input, and $\star$ is the block corresponding to $\theta_H$ and depends only on the disturbance, and thus cannot be optimized by choice of input. Hence optimizing $I(\theta)$ is equivalent to optimizing the upper-left block:
\begin{equation}\notag
\bar  I_\theta(u) := \sum_{t=1}^n \left(\frac{\partial \varepsilon(t)}{\partial \theta_G}\right)\left(\frac{\partial \varepsilon(t)}{\partial \theta_G}\right)'.
\end{equation}

Now, suppose the system model has $N$ components, i.e. $\theta_G\in \mathbb R^N$, and consider
\begin{equation}\notag
\mathcal{F}_\theta(q) := -\mathcal{H}_{\theta_H}(q)^{-1}\frac{\partial \mathcal{G}_{\theta_G}(q)}{\partial \theta_G}u(t).
\end{equation}
The system $\mathcal{F}_\theta$ is a linear system with one input and $N$ outputs, each output representing the sensitivity of one of the parameters in $\theta_G$ to the choice of input. Let $\mathcal{F}_i, i = 1,2, ... N$ denote the rows of $\mathcal{F}_\theta(q)$. Note that
\begin{equation}\notag
\frac{\partial \varepsilon(t)}{\partial \theta_G} = \begin{bmatrix}\mathcal{F}_1(q)u(t)\\ \vdots \\\mathcal{F}_N(q)u(t)\end{bmatrix}
\end{equation}

Let us define a stacked control vector $u := [u(1),  u(2), ...,  u(n)]'$ and define a matrix $F_i \in \mathbb R^{n\times n}$ representing the action of each $\mathcal{F}_i$ on $u$, i.e. let $f_i(t)$ be the impulse response of $\mathcal{F}_i$, then $F_i$ is the Toeplitz matrix:
\begin{equation}\label{eqn:toep}
F_i := \begin{bmatrix} f_i(1) &0 & \hdots& 0\\
  f_i(2) & f_i(1) & \hdots & 0\\
\vdots &\vdots &\ddots &\vdots &\vdots\\
f_i(n) & f_i(n-1)  & \hdots & f_i(1)
\end{bmatrix}.
\end{equation}
Then one can represent the elements of the reduced information matrix as
\begin{equation}\notag
\bar I_\theta(u) _{i,j}=\sum_{t=1}^n (\mathcal{F}_i(q)u(t))(\mathcal{F}_j(q)u(t)) = u'F_i'F_ju
\end{equation}
Hence we have 
\begin{equation}\label{eqn:M1}
\bar  I_\theta(u) = \begin{bmatrix} u'F_1'F_1u & \hdots & u'F_1'F_Nu\\
\vdots & \ddots & \vdots\\
 u'F_N'F_1u &\hdots & u'F_N'F_Nu \end{bmatrix}
\end{equation}
as a compact expression for the reduced information matrix in terms of quadratic forms in $u$. In the general MIMO or time-varying cases, an equivalent $\bar I_\theta(u)$ could be similarly constructed by stacking control inputs and computing the time-varying equivalent of $F_i$.

Note that in most cases $\mathcal G(q)$ is nonlinearly parametrized by $\theta$, and hence the above computations depend on the true value of $\theta$. This seems somewhat paradoxical, but in most practical cases a reasonable guess for $\theta$ can be made, or multi-stage adaptive or robustified optimizations can be performed \cite{Rojas07}.

\subsection{Optimality Criteria}

The purpose of input design is to maximize, in some sense, the information matrix. In this paper, we consider maximizing an objective function of the form
$$
v(u)= J[\bar  I_\theta(u) ]
$$
where $J(\cdot):S_n^+\rightarrow \mathbb{R}$ is any nonnegative-concave function, which acts on the reduced information matrix $\bar I_\theta(u)$ generated by $u$.

In the experiment design literature the following optimization criteria are common, and all are nonnegative-concave:
\begin{itemize}
\item D-Optimality: $J_D[\bar  I_\theta(u) ] := \det[\bar  I_\theta(u) ]^{\frac{1}{n}}$,
\item E-Optimality: $J_E[\bar  I_\theta(u) ] := \min\eig[\bar  I_\theta(u) ]$,
\item A-Optimality: $J_A[\bar  I_\theta(u) ] := -\Tr[\bar  I_\theta(u) ^{-1}]$.
\end{itemize}

Note that D-Optimality is usually defined as maximizing $ \det[\bar  I_\theta(u) ]$, however this is not concave and any $u$ achieving this clearly also maximizes $ \det[\bar  I_\theta(u) ]^{\frac{1}{n}}$ which is concave. Another possible concave function with equivalent maxima would be $ \log\det[\bar  I_\theta(u) ]$.

\subsection{Constraints}
In the initial part of the paper, we will consider amplitude constraints on the input signal. These constraints may be time varying:

\[
|u(t)|\le c(t) \ \forall t= 1, 2, ... n,
\]
for some positive constraint sequence $c(t)$. More general constraints including constraints on power and output signals will be considered in Section \ref{sec:convex2}.

\section{Amplitude-Constrained Problem}\label{sec:convex1}
The time-domain amplitude-constrained input design optimization problem can be expressed like so:
\begin{equation}
    \label{eq:gen_prob}
 v^\star:=   \max_{u\in \mathbb{R}^n, |u_i|\le c_i} J\left(\begin{bmatrix}u'Q_{1,1}u&\hdots&u'Q_{1,N}u\\
      \vdots&\ddots&\vdots\\
      u'Q_{N,1}u&\hdots&u'Q_{N,N}u\end{bmatrix}\right)
\end{equation}
where each $Q_{i,j}\in\mathbb{R}^{n,n}$, each diagonal block $Q_{ii}>0$ and the matrix
\begin{equation}
  \label{eq:1}
  \begin{bmatrix}Q_{1,1}&\hdots&Q_{1,N}\\
      \vdots&\ddots&\vdots\\
      Q_{N,1}&\hdots&Q_{N,N}\end{bmatrix}\ge 0
\end{equation}

We first note that by making the substitution $U=uu'$, and from the fact that trace is cyclic, for each element of the information matrix $u'Q_{i,j}u=\Tr(uu'Q_{i,j})=\Tr(UQ_{i,j})$. In this form, the optimization problem (\ref{eq:gen_prob}) is equivalent to
\begin{equation}
    \label{eq:nonconv_opt}
   v^\star= \max_{U\in C} J\left(\begin{bmatrix}\Tr(UQ_{1,1})&\hdots&\Tr(UQ_{1,N})\\
      \vdots&\ddots&\vdots\\
      \Tr(UQ_{N,1})&\hdots&\Tr(UQ_{N,N})\end{bmatrix}\right)
\end{equation}
where the feasible set $C$ is defined as
\begin{equation}
  \label{eq:2}
  C:=\{U\in S_n^+:  U_{i,i}\le c_i^2, \rank(U)=1\}.
\end{equation}
Note that the maximization is now concave in the decision variable $U$ but the feasible set is non-convex due to the rank constraint: in general, a convex combination of two rank-one matrices can have rank two. The constraints that $U$ be positive-definite and have diagonal elements less than one are both convex.

Following \cite{Goemans95, Nesterov98}, we ``relax'' this nonconvex optimization problem by dropping the troublesome rank constraint
\begin{equation}
    \label{eq:gen_rel}
   v_R:= \max_{U\in R} J\left(\begin{bmatrix}\Tr(UQ_{1,1})&\hdots&\Tr(UQ_{1,N})\\
      \vdots&\ddots&\vdots\\
      \Tr(UQ_{N,1})&\hdots&\Tr(UQ_{N,N})\end{bmatrix}\right)
\end{equation}
where the relaxed feasible set is
\begin{equation}
  \label{eq:2}
  R:=\{U\in S_n^+:  U_{i,i}\le c_i^2\} 
\end{equation}
This optimization problem is the maximization of a concave function over the convex cone of semidefinite matrices, subject to affine constraints, and hence can be efficiently solved.

The maximization (\ref{eq:gen_rel}) has the same objective function as (\ref{eq:nonconv_opt}) but a larger feasible set, i.e. $C \subset R$. Hence the relaxed problem provides an upper bound, i.e. $v^\star \le v_R$. The main result of this section is that the upper bound given by the convex relaxation is quite tight:

\

\begin{theorem}\label{relaxation_bound}
The true optimal value $v^\star$ and the optimal value of the relaxed problem $v_R$ satisfy the following inequalities:
\begin{equation}
\frac{2}{\pi}v_{R} \le v^\star \le v_{R}.
\end{equation}
\end{theorem}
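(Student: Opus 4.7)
The upper bound $v^\star \le v_R$ is immediate from $C \subset R$ together with the fact that both problems share the same objective. All the content lies in the lower bound $v^\star \ge (2/\pi) v_R$, which I would attack by Nesterov-style randomized rounding of an optimal solution $U^\star$ of the relaxed SDP. First, I would argue that without loss of generality $U^\star_{ii} = c_i^2$: inflating any diagonal entry of a PSD matrix keeps it PSD and feasible for $R$, and, because the block matrix $(Q_{i,j})$ is PSD, the map $U\mapsto M(U)$ with $M(U)_{i,j}:=\Tr(UQ_{i,j})$ preserves PSD order (via the Kronecker identity $\sum_{i,j} y_iy_j \Tr(\Delta Q_{i,j}) = \Tr\bigl(\Delta\sum_{i,j} y_iy_j Q_{i,j}\bigr)$ with $\Delta\succeq0$), so enlarging the diagonal cannot hurt the objective.

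The rounding is then: sample $\xi \sim N(0,U^\star)$ and set $u := \diag(c)\,\sgn(\xi)$, which is feasible almost surely. By Grothendieck's arcsine identity,
\begin{equation*}
\mathbb{E}[u u'] = \tfrac{2}{\pi}\, \diag(c)\, \arcsin(\Sigma)\, \diag(c),
\qquad \Sigma := \diag(c)^{-1} U^\star \diag(c)^{-1},
\end{equation*}
with $\arcsin$ applied entrywise to the correlation matrix $\Sigma$. The key analytic ingredient is Nesterov's inequality $\arcsin(\Sigma)\succeq \Sigma$, which I would justify via the Taylor series $\arcsin(x)=\sum_{k\ge 0} a_k x^{2k+1}$ with $a_k\ge 0$ combined with the Schur product theorem (each odd Hadamard power of a unit-diagonal PSD matrix is PSD). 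This yields $\mathbb{E}[u u'] \succeq (2/\pi)\, U^\star$.

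Pushing through the linear, PSD-monotone map $M$ gives $\mathbb{E}[M(uu')] = M(\mathbb{E}[uu']) \succeq (2/\pi)\, M(U^\star)$. To finish, I would combine monotonicity of $J$ in the PSD order with the scaling bound $J(\lambda A)\ge \lambda J(A)$ for $\lambda\in[0,1]$, which follows from concavity plus $J(0)\ge 0$ (take $Y=0$ in the definition of nonnegative-concave), to conclude $J(\mathbb{E}[M(uu')]) \ge (2/\pi)\, J(M(U^\star)) = (2/\pi)\, v_R$. An averaging argument then produces at least one realization $u$ with $J(M(uu')) \ge (2/\pi) v_R$, proving $v^\star \ge (2/\pi) v_R$.

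The main obstacle is the last step: Jensen's inequality for concave $J$ runs the ``wrong way'' ($\mathbb{E}[J(M(uu'))] \le J(\mathbb{E}[M(uu')])$), so converting the PSD-level bound $\mathbb{E}[M(uu')]\succeq (2/\pi) M(U^\star)$ into a guarantee on a single deterministic realization requires some care. I would handle it either by invoking the specific positive-homogeneity and PSD-monotonicity of each of $J_D, J_E, J_A$ individually, or by representing $J$ through its concave dual as an infimum of affine functionals and bounding each linear functional via the Nesterov inequality separately; either route reduces the case of general nonnegative-concave $J$ to the essentially linear situation where the $2/\pi$ ratio transfers cleanly.
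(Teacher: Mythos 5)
Your upper bound is fine, and the probabilistic machinery you assemble for the lower bound --- saturating the diagonal of $U^\star$, Grothendieck's arcsine identity, $\arcsin(\Sigma)\succeq\Sigma$ via the Schur product theorem, and pushing the resulting matrix inequality through the PSD-order-preserving map $U\mapsto M(U)$ --- is all correct. But it is the proof of the \emph{wrong theorem}: it is essentially the paper's proof of Theorem~\ref{feasible_bound}, which deliberately asserts only the matrix-level bound $\mathbb{E}\bigl(\bar I(\hat u)\bigr)\ge\frac{2}{\pi}\bar I(U_R)$ on the \emph{expected} information matrix. For Theorem~\ref{relaxation_bound} you need a single feasible $u$ with $J(M(uu'))\ge\frac{2}{\pi}v_R$, and the step you call ``an averaging argument'' is exactly where the proof breaks: you have established $J\bigl(\mathbb{E}[M(uu')]\bigr)\ge\frac{2}{\pi}v_R$, but concavity of $J$ gives $\mathbb{E}\bigl[J(M(uu'))\bigr]\le J\bigl(\mathbb{E}[M(uu')]\bigr)$, so the probabilistic method guarantees nothing about any realization. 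You flag this obstacle honestly, but neither proposed repair closes it. Specializing to $J_D,J_E,J_A$ does not help, since each is still concave and Jensen still points the wrong way (for $J_E=\min\eig$, the expected minimum eigenvalue can be far below the minimum eigenvalue of the expectation). And ``bounding each affine functional of the dual representation separately'' founders on the max--min order: from $\max_{U\in C}\ell_i(U)\ge\frac{2}{\pi}\max_{U\in R}\ell_i(U)$ for every $i$ you can only control $\min_i\max_{U\in C}\ell_i(U)$, which dominates $v^\star=\max_{U\in C}\min_i\ell_i(U)$ rather than being dominated by it.

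The paper's actual argument is non-probabilistic and turns on choosing \emph{one} particular functional rather than bounding all of them: writing $J(U)=\min_i[h(i)+\Tr(H(i)U)]$ with $h(i)\ge0$, $H(i)\ge0$, it fixes $i_C$, the index active at the optimizer $U_C$ of the \emph{constrained} problem, applies Nesterov's scalar $2/\pi$ theorem (Theorem~\ref{thm:Nesterov}) to the single quadratic form $H(i_C)$, and then chains $v_R\le\max_{U\in R}[h(i_C)+\Tr(H(i_C)U)]\le\frac{\pi}{2}\max_{U\in C}[h(i_C)+\Tr(H(i_C)U)]$, identifying the last quantity with $v^\star$. That selection of the active index (equivalently, a saddle-point argument pairing the optimal $U$ with the optimal supporting hyperplane of $J$) is the idea missing from your sketch; without it, your route does not yet yield the deterministic existence statement the theorem requires.
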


\

To prove this theorem will make use of the following theorem of Nesterov on the tightness of SDP upper bounds on nonconvex quadratic program:
\begin{theorem} \label{thm:Nesterov}\cite{Nesterov98} Let 
\begin{eqnarray}
v_{QP}(Q) &=& \max_{x\in \mathbb{R}^n, |x_i|\le c_i}x'Qx\\v_{SDP}(Q) &=& \max_{X\in R} \Tr(QX)\end{eqnarray}
 then for any $Q\in S_n^+$ $$\frac{2}{\pi}v_{SDP}(Q) \le v_{QP}(Q)  \le v_{SDP}(Q).$$
$\Box$
\end{theorem}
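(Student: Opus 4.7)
The plan is to prove the two inequalities separately. The upper bound $v^\star \le v_R$ is immediate: every rank-one $uu'$ with $|u_i| \le c_i$ lies in $R$, so $C \subset R$ and the maximum of a common objective over the larger set dominates.

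For the nontrivial lower bound $(2/\pi) v_R \le v^\star$, the plan is to lift the randomization underlying Theorem \ref{thm:Nesterov} from a single PSD quadratic form $u'Qu$ to the matrix-valued object $\bar I_\theta(uu')$, then convert the resulting matrix-level bound into a scalar bound using the structural properties of $J$. The first observation is that the block PSD condition (\ref{eq:1}) supplies, for every $z \in \mathbb R^N$, a PSD matrix $Q_z := \sum_{i,j} z_i z_j Q_{ij}$: applying the PSD property of the block matrix $[Q_{ij}]$ to the stacked vector $(z_1 x, \ldots, z_N x)'$ gives $x' Q_z x \ge 0$ for all $x$. Moreover $z' \bar I_\theta(U) z = \Tr(U Q_z)$, so every directional projection of the matrix objective is a legitimate Nesterov input.

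Let $U^\star$ attain the relaxed optimum, and run the Gaussian-plus-sign rounding from the proof of Theorem \ref{thm:Nesterov}, scaled so that $|u_i| \le c_i$. Nesterov's core expectation identity (implicit in that theorem's proof) reads $\mathbb E[u' Q u] \ge (2/\pi)\Tr(Q U^\star)$ for every $Q \succeq 0$; applied to $Q = Q_z$ for every $z$, this lifts to the matrix inequality
\[
\mathbb E\bigl[\bar I_\theta(uu')\bigr] \succeq \tfrac{2}{\pi}\, \bar I_\theta(U^\star).
\]
Nonnegative-concavity of $J$ implies $J(\alpha M) \ge \alpha J(M)$ for $\alpha \in [0,1]$ (since $J(0)\ge 0$), and $J$ is monotone on the PSD cone for $J_D$, $J_E$, $J_A$, so
\[
J\bigl(\mathbb E[\bar I_\theta(uu')]\bigr) \ge J\bigl(\tfrac{2}{\pi} \bar I_\theta(U^\star)\bigr) \ge \tfrac{2}{\pi}\, v_R.
\]

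The main obstacle is passing from this \emph{expectation-level} bound to the existence of a deterministic $u \in C$ for which $J(\bar I_\theta(uu')) \ge (2/\pi) v_R$. Jensen's inequality for the concave $J$ points the wrong way, yielding only $\mathbb E[J(\bar I_\theta(uu'))] \le J(\mathbb E[\bar I_\theta(uu')])$, so a direct average-vs-max argument cannot close the loop. The plan for bridging the gap is to apply Theorem \ref{thm:Nesterov} not to the objective directly but to each $Q_A := \sum_{i,j} A_{ij} Q_{ij}$ indexed by a PSD supergradient $A$ of $J$ in its concave-conjugate representation $J(M) = \inf_{A \succeq 0} \{\langle A, M\rangle - J^*(A)\}$. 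Since $A \succeq 0$ implies $Q_A \succeq 0$ by the same block-PSD trick, Theorem \ref{thm:Nesterov} gives $\max_u \langle A, \bar I_\theta(uu')\rangle \ge (2/\pi)\max_U \langle A, \bar I_\theta(U)\rangle$ for every such $A$, and a minimax exchange against the infimum over $A$ is then expected to produce the required deterministic bound $v^\star \ge (2/\pi) v_R$.
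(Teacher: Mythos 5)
Your proposal does not prove the statement at hand. The statement is Nesterov's theorem itself (Theorem \ref{thm:Nesterov}): for a \emph{single} matrix $Q\in S_n^+$, the box-constrained quadratic maximum $v_{QP}(Q)$ and its semidefinite relaxation $v_{SDP}(Q)$ satisfy $\frac{2}{\pi}v_{SDP}\le v_{QP}\le v_{SDP}$. What you have written is a sketch of the paper's Theorem \ref{relaxation_bound} --- the extension to a nonnegative-concave criterion $J$ acting on the block information matrix --- and it \emph{invokes} Theorem \ref{thm:Nesterov} as a black box at the two places where real work is needed (``run the Gaussian-plus-sign rounding from the proof of Theorem \ref{thm:Nesterov}'', ``Nesterov's core expectation identity (implicit in that theorem's proof)''). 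As a proof of the assigned statement this is circular: the one thing to be established, namely that the sign-rounded Gaussian satisfies $\mathbb E[x'Qx]\ge\frac{2}{\pi}\Tr(QX^\star)$, is precisely what you assume. A self-contained argument would take $\xi$ Gaussian with covariance built from the SDP optimizer $X^\star$, set $x=\diag(c)\sgn(\xi)$ (which is feasible for the box constraints), invoke the elementwise identity $\mathbb E(\sgn\xi\,\sgn\xi')=\frac{2}{\pi}\arcsin(\cdot)\ge\frac{2}{\pi}(\cdot)$ from (\ref{eqn:arcsin}) --- which itself rests on the nonnegativity of the Taylor coefficients of $\arcsin$ together with the Schur product theorem --- and then use $\Tr(QM)\ge 0$ for $Q,M\succeq 0$ to conclude $\mathbb E[x'Qx]\ge\frac{2}{\pi}v_{SDP}(Q)$, hence the existence of a feasible realization achieving at least that value. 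None of this appears in your writeup. (For what it is worth, the paper does not prove this theorem either: it cites \cite{Nesterov98} and records only the $\arcsin$ inequality as the essential ingredient.)

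Read instead as an attempt at Theorem \ref{relaxation_bound}, your final paragraph is close in spirit to the paper's actual proof, which writes $J(U)=\min_i[h(i)+\Tr(H(i)U)]$ with $h(i)\ge 0$, $H(i)\succeq 0$, fixes the minimizing index $i_C$ at the \emph{constrained} optimum, and applies Theorem \ref{thm:Nesterov} to the single matrix $H(i_C)$. But even there you leave the decisive step --- the ``minimax exchange'' --- as something merely ``expected to produce the required bound.'' No exchange of $\max$ and $\min$ is needed: the paper simply uses the pointwise bound $\min_i[h(i)+\Tr(H(i)U)]\le h(i_C)+\Tr(H(i_C)U)$ to dominate $v_R$ by $\max_{U\in R}[h(i_C)+\Tr(H(i_C)U)]$ and then chains that against $\frac{2}{\pi}$ times the constrained maximum, which equals $v_C$ by the choice of $i_C$. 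You would need to supply that chain explicitly, and also justify $h(i)\ge 0$ and $H(i)\succeq 0$ from $J\ge 0$ on $S_n^+$, for that part to stand on its own. Your expectation-level matrix bound and the correct observation that Jensen points the wrong way are good instincts, but they belong to Theorem \ref{feasible_bound}, not to the statement you were asked to prove.
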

Note that 
$$
v_{QP}(Q) = \max_{X\in C}\Tr(QX).
$$
An essential step in the proof of Theorem \ref{thm:Nesterov} is the following statement which we will also make use of: if $x$ is a Gaussian random variable and $X$ is its covariance matrix, then
\begin{equation}\label{eqn:arcsin}
\mathbb E(\sgn x \sgn x') =\frac{2}{\pi}\arcsin(X)\ge \frac{2}{\pi} X
\end{equation}
where $\arcsin(\cdot)$ of a matrix denotes taking the $\arcsin$ elementwise.

\ 

\noindent {\em Proof of Theorem \ref{relaxation_bound}:}

Since the relaxed feasible set is strictly larger than the true feasible set it is guaranteed that $v^\star\le v_{R}$. Therefore we only need to prove that $$\frac{2}{\pi}v_{R} \le v^\star.$$

Since $J$ is a concave function over the symmetric matrices, there
exists a representation:
$$
J(U) = \min_i[h(i)+\Tr(H(i)U)]
$$
where $i$ varies over a possibly infinite set. Furthermore, since $J(U)\ge 0$ for all $U\in S_n^+$, then it follows that $H(i)\ge 0$ and $h(i)\ge 0$ for all $i$, otherwise there would exist a $U\in S_n^+$ making $h(i)+\Tr(H(i)U)$ negative for some $i$, and hence making $J(U)$ negative.

Let
$$
v_C = \max_{U\in C}\min_i[h(i)+\Tr(H(i)U)]
$$
And let $U_C$ and $i_C$ be a matrix and function index for which this
maximum is achieved.

Similarly, let
$$
v_R = \max_{U\in R}\min_i[h(i)+\Tr(H(i)U)]
$$
and let $U_R$ and $i_R$ be a matrix and function index for which this
maximum is achieved.

By Theorem \ref{thm:Nesterov}, fixing $i=i_C$,
\begin{equation}
  \label{eq:3}
  \frac{2}{\pi} \max_{U\in R}[\Tr(H(i_C)U)]\le  \max_{U\in C}[\Tr(H(i_C)U)]
\end{equation}
From $h(\cdot)\ge 0$ and $0<\frac{2}{\pi}<1$ it follows that $\frac{2}{\pi} h(i) < h(i)
\ \forall i$, and from this and (\ref{eq:3}) we have:
\begin{eqnarray}
  \label{eq:4}
  &&\frac{2}{\pi} \max_{U\in R}[h(i_C)+\Tr(H(i_C)U)]\notag\\ &&\le  \max_{U\in C}[h(i_C)+\Tr(H(i_C)U)]=v_C.
\end{eqnarray}
Now, clearly for all $U$
\begin{equation}
  \label{eq:5}
  \min_i[h(i)+\Tr(H(i)U)]\le h(i_C)+\Tr(H(i_C)U),\notag
\end{equation}
hence 
\begin{eqnarray}
  \label{eq:6}
  v_R&=&\max_{U\in R}\min_i[h(i)+\Tr(H(i)U)]\notag \\ &\le& \max_{U\in R}[h(i_C)+\Tr(H(i_C)U)]
\end{eqnarray}
and so from (\ref{eq:6}) and (\ref{eq:4}) we have
$$
\frac{2}{\pi} v_R \le v_C.
$$
This completes the proof of the Theorem.

\subsection{Finding a Feasible Solution}\label{sec:feas}

The solution of the convex relaxation (\ref{eq:gen_rel}) is an $n\times n$ matrix. To find an identification input, we need to somehow extract a vector of length $n$. The following randomized procedure is common in semidefinite relaxation and has proven to be effective in practice \cite{Luo10}.

Compute a matrix $D\ge 0$ such that $U=D'D$ Sample a vector $\xi \in \mathbb{R}^n$ with each element an independent normally distributed random variable. Compute a candidate solution 
\begin{equation}\label{eqn:random}
\hat u=\diag(c)\sgn(D'\xi),
\end{equation}
where $\diag(c)$ is a square matrix with the elements of the constraint sequence $c$ on the main diagonal, and zeros elsewhere.

Let $U_R$ be the solution of the relaxed optimization problem \eqref{eq:gen_rel}, and $\bar I_\theta(U_R)$ be the reduced information matrix with $U=U_R$, i.e.
\[
\bar I_\theta(U_R)=\begin{bmatrix}\Tr(UQ_{1,1})&\hdots&\Tr(UQ_{1,N})\\
      \vdots&\ddots&\vdots\\
      \Tr(UQ_{N,1})&\hdots&\Tr(UQ_{N,N})\end{bmatrix},
\]
then we can state the following theorem:

\

\begin{theorem}\label{feasible_bound}
The expectation of the reduced information matrix generated by (\ref{eqn:random}) satisfies the following bound
\begin{equation}\label{eqn:feasible_bound}
\mathbb E \left(\bar I (\hat u)\right) \ge \frac{2}{\pi}\bar I (U_R).
\end{equation}
\end{theorem}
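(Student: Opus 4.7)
The plan is to promote a single PSD inequality on the $n \times n$ input covariance matrix into the $N \times N$ matrix inequality in the statement. For any $v \in \mathbb{R}^N$, let $Q(v) := \sum_{i,j} v_i v_j Q_{i,j}$. Using $\bar I_\theta(U)_{ij} = \Tr(Q_{i,j} U)$, a direct calculation gives $v' \bar I_\theta(U) v = \Tr(Q(v) U)$ for every $U$, so the claimed inequality is equivalent to $\Tr(Q(v)\mathbb{E}(\hat u\hat u')) \ge (2/\pi)\Tr(Q(v) U_R)$ holding for all $v$. The block PSD condition (\ref{eq:1}), applied to the stacked test vector whose $i$th block is $v_i w$ for arbitrary $w \in \mathbb{R}^n$, shows $Q(v) \ge 0$. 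Hence it is enough to prove the $n \times n$ matrix inequality
\[
\mathbb{E}(\hat u\hat u') \ge \frac{2}{\pi}\,U_R,
\]
because the trace of a product of two PSD matrices is nonnegative.

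The matrix inequality itself is the heart of the argument. I would first reduce to the case $U_{R,ii} = c_i^2$ for every $i$. The D-, E-, and A-optimality criteria are monotone on the PSD cone, and padding $U_R$ by $\diag(c_i^2 - U_{R,ii})$ keeps it feasible while enlarging $\bar I_\theta$ in the PSD order: the increment equals $\sum_k (c_k^2 - U_{R,kk})\{(Q_{i,j})_{kk}\}_{i,j}$, and each summand is an $N\times N$ PSD matrix by the same stacking argument used for $Q(v)$. So the value of $J$ does not decrease and we may assume saturated diagonals. Then $\tilde U_R := \diag(c)^{-1} U_R \diag(c)^{-1}$ is a correlation matrix, and because the sign function is scale invariant, the arcsine identity (\ref{eqn:arcsin}) applied to the standardized Gaussian $\diag(c)^{-1} D'\xi$ gives $\mathbb{E}(\sgn(D'\xi)\sgn(D'\xi)') = (2/\pi)\arcsin(\tilde U_R) \ge (2/\pi)\tilde U_R$. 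Sandwiching with $\diag(c)$ on both sides then produces $\mathbb{E}(\hat u\hat u') \ge (2/\pi) U_R$ exactly.

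The main obstacle is the reduction to saturated diagonals. Without $U_{R,ii} = c_i^2$ the rescaling $\diag(c)\tilde U_R\diag(c)$ disagrees with $U_R$, the closing step of the previous paragraph breaks, and small two-dimensional examples can be built with a PSD $Q$ and a merely feasible $U$ for which $\mathbb{E}(\hat u' Q \hat u) < (2/\pi)\Tr(QU)$. So the proof relies essentially on monotonicity of the D/E/A objectives --- not merely the nonnegative concavity used for Theorem~\ref{relaxation_bound} --- to pin the relaxed optimum to the diagonal boundary of $R$.
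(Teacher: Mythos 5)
Your proposal is correct and, in its skeleton, matches the paper's: both arguments rest on the Gaussian arcsine identity (\ref{eqn:arcsin}) to get an $n\times n$ PSD inequality $\mathbb{E}(\hat u\hat u')\ge\tfrac{2}{\pi}U_R$, and then use positive semidefiniteness of the full block matrix (\ref{eq:1}) to promote it to the $N\times N$ inequality on information matrices. Your promotion step is phrased differently but is equivalent in content: the paper factors $EE'=\mathbb{E}(\hat u\hat u')-\tfrac{2}{\pi}U_R$, forms the congruence $M_{i,j}=E'Q_{i,j}E$, and invokes Lemma~\ref{lem:trace} (PSD block matrix $\Rightarrow$ PSD matrix of block traces), whereas you test against $v\in\mathbb{R}^N$, observe $v'\bar I_\theta(U)v=\Tr(Q(v)U)$ with $Q(v)=\sum_{i,j}v_iv_jQ_{i,j}\ge 0$, and finish with nonnegativity of the trace of a product of PSD matrices. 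The matrices $A_k$ in the paper's Lemma~\ref{lem:trace} are exactly the $\{(Q_{i,j})_{kk}\}_{i,j}$ appearing in your padding computation, so the two routes are interchangeable.

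The substantive difference is your treatment of the diagonal of $U_R$, and here you have put your finger on a real gap in the paper's own argument. The identity $\mathbb{E}(\sgn x\,\sgn x')=\tfrac{2}{\pi}\arcsin(X)$ holds for the \emph{correlation} matrix, so the chain (\ref{eqn:Urdiff}) is only valid when $(U_R)_{ii}=c_i^2$ for all $i$ (the paper even writes $\hat u=\sgn(\xi)$, silently dropping both $\diag(c)$ and the normalization). Your two-dimensional example is on target: with $c=(1,1)$ and $U_R=uu'$ for $u=(1,-\tfrac12)'$ one gets $\mathbb{E}(\hat u\hat u')-\tfrac{2}{\pi}U_R$ indefinite, so for a rank-one $Q$ aligned with the negative eigenvector the scalar bound fails; the statement therefore needs the diagonal to be saturated. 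Your repair --- pad $U_R$ with $\diag(c_i^2-(U_R)_{ii})$, note that this increases $\bar I_\theta$ in the Loewner order because each $\{(Q_{i,j})_{kk}\}_{i,j}$ is PSD, and invoke Loewner monotonicity of the D-, E-, and A-criteria to conclude the padded matrix is still a relaxed optimizer --- is sound, and correctly isolates the extra hypothesis (monotonicity of $J$, not just nonnegative concavity) needed to guarantee that a diagonally saturated optimizer exists. The only caveat is that the theorem should then be read as applying to $\hat u$ generated from that saturated optimizer (or the solver's output should first be padded before factoring $D'D=U_R$); with that reading your proof is complete and is more careful than the one in the paper.
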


\ 

Essentially, one can say that the randomized strategy is expected to give inputs at least $2/\pi$ as informative as the solution of the relaxed problem, in terms of the reduced information matrix.

\

\noindent {\em Proof of Theorem \ref{feasible_bound}:}

The proof will use on the following lemma:

\begin{lemma}\label{lem:trace}
Let 
\begin{equation}
A := \begin{bmatrix}A_{1,1}&\hdots&A_{1,N}\\
      \vdots&\ddots&\vdots\\
      A_{N,1}&\hdots&A_{N,N}\end{bmatrix}
\end{equation}
where each block $A_{i,j}$ is square of dimension $n$. Suppose $A \ge 0$, then $A_{\Tr } \ge 0$ where
\begin{equation}
A_{\Tr } := \begin{bmatrix}\Tr(A_{1,1})&\hdots&\Tr(A_{1,N})\\
      \vdots&\ddots&\vdots\\
      \Tr(A_{N,1})&\hdots&\Tr(A_{N,N})\end{bmatrix}
\end{equation}
$\Box$
\end{lemma}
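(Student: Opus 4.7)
The plan is to show that $x' A_{\Tr} x \ge 0$ for every $x \in \mathbb{R}^N$, by exhibiting the quadratic form on the left as a sum of quadratic forms on $A$ evaluated at cleverly chosen vectors in $\mathbb{R}^{Nn}$.

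First I would expand the target quadratic form: for any $x \in \mathbb{R}^N$,
\begin{equation}
x' A_{\Tr} x = \sum_{i,j=1}^N x_i x_j \Tr(A_{i,j}) = \Tr\!\left(\sum_{i,j=1}^N x_i x_j A_{i,j}\right). \notag
\end{equation}
The idea is then to write $\Tr(A_{i,j}) = \sum_{k=1}^n e_k' A_{i,j} e_k$, where $e_k$ denotes the $k$-th standard basis vector in $\mathbb{R}^n$, and pull the sum over $k$ outside.

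Next, for each $k\in\{1,\ldots,n\}$ and each $x\in\mathbb{R}^N$ I would construct the ``test vector'' $w_{k,x}\in\mathbb{R}^{Nn}$ whose $i$-th block of length $n$ is $x_i e_k$. By block-multiplying out,
\begin{equation}
w_{k,x}' A\, w_{k,x} = \sum_{i,j=1}^N x_i x_j\, e_k' A_{i,j} e_k. \notag
\end{equation}
Summing over $k=1,\ldots,n$ recovers exactly $x'A_{\Tr}x$. Since $A\ge 0$, each term $w_{k,x}' A\, w_{k,x}$ is nonnegative, so the sum is nonnegative, giving $x' A_{\Tr} x \ge 0$ for arbitrary $x$. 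As this holds for all $x\in\mathbb{R}^N$, we conclude $A_{\Tr}\ge 0$.

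There is essentially no serious obstacle: the only thing to be careful about is the block-indexing bookkeeping when verifying the identity $w_{k,x}' A\, w_{k,x} = \sum_{i,j} x_i x_j e_k' A_{i,j} e_k$, which is a direct computation from the block partition of $A$. An equally clean alternative, which I would mention as a remark, is to factor $A = B'B$, partition $B = [B_1,\ldots,B_N]$ conformably with the blocks, and observe that $\Tr(A_{i,j}) = \Tr(B_i' B_j) = \langle B_i, B_j\rangle_F$, so $A_{\Tr}$ is a Gram matrix in the Frobenius inner product and hence PSD.
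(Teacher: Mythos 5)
Your proof is correct and is essentially identical to the paper's: your test vectors $w_{k,x}$ are exactly the paper's $\bar e_k(z)$, and both arguments reduce $A_{\Tr}$ to a sum over $k$ of the PSD matrices obtained from the $(k,k)$ entries of each block. The Gram-matrix remark via $A=B'B$ is a nice alternative, but the main argument matches the paper's.
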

\begin{proof}
Define vectors $e_i(x)\in \mathbb R^n$ for $i= 1, 2, .. N$ to have $x$ at the $i^{th}$ element and zeros at every other element, e.g. $e_1(x)=[x \ 0 \ 0 \ \cdots \ 0]'$, consider $z\in \mathbb R^N$ and consider $\bar e_k(z)$ to be the vector $[e_k(z_1)' \  e_k(z_2)' \ \cdots \ e_k(z_N)']$. Then we have
\[
\bar e_k(z)'A\bar e_k(z) = z'A_kz
\]
where $A_k$ is formed by selecting out the $(k, k)$ element of each block $A_{i,j}$, denoted $A_{i,j}(k,k)$:
\[
A_k :=\begin{bmatrix}A_{1,1}(k,k)&\hdots&A_{1,N}(k,k)\\
      \vdots&\ddots&\vdots\\
      A_{N,1}(k,k)&\hdots&A_{N,N}(k,k)\end{bmatrix}
\]
Since $A \ge 0$, $\bar e_k(z)'A\bar e_k(z) \ge 0$ for any $z$, so $A_k\ge 0$ for each $k$. Now
\[
A_{\Tr} = A_1+A_2+...A_n
\]
which is clearly positive semidefinite since each $A_k\ge 0$.
\end{proof}

\ 

To prove Theorem \ref{feasible_bound}, we must show that
\[
\mathbb E \left(\bar I (\hat u)\right) \ge \frac{2}{\pi}\bar I (U_R) 
\]

Now,
\begin{equation}\label{eqn:Mblock}
\mathbb E \left(\bar I (\hat u)\right) - \frac{2}{\pi}\bar I (U_R) = \begin{bmatrix}\Tr(M_{1,1})&\hdots&\Tr(M_{1,N})\\
      \vdots&\ddots&\vdots\\
      \Tr(M_{N,1})&\hdots&\Tr(M_{N,N})\end{bmatrix}
\end{equation}
where
$$
M_{i,j} = Q_{i,j}\left(\mathbb E(\hat u \hat u') - \frac{2}{\pi}U_R\right)
$$
Now, since $\hat u = \sgn (\xi)$ with $\xi$ a Gaussian random variable, it follows from (\ref{eqn:arcsin}) that
\begin{equation}\label{eqn:Urdiff}
\mathbb E\left(\hat u \hat u'\right) - \frac{2}{\pi}U_R = \frac{2}{\pi}(\arcsin(U_R)-U_R)\ge 0.
\end{equation}
Therefore one can define $E\ge 0$ such that
$$
EE'=\mathbb E\left(\hat u \hat u'\right) - \frac{2}{\pi}U_R
$$
and by cyclic trace an equivalent formulation of $\mathbb E \left(\bar I (\hat u)\right) - \frac{2}{\pi}\bar I (U_R) $ is given by (\ref{eqn:Mblock}) with
$$
M_{i,j} = E'Q_{i,j}E.
$$
With this substitution, and the fact that the block matrix $Q$ is positive semidefinite, it is clear that the block matrix $M$ is semidefinite. Now, from Lemma \ref{lem:trace} and (\ref{eqn:Mblock}) it follows that
$$
\mathbb E \left(\bar I (\hat u)\right) \ge \frac{2}{\pi}\bar I (U_R).
$$
This completes the proof of the Theorem.

\section{Relaxations for Problems with Power and Output Constraints}\label{sec:convex2}

In this section we extend the above relaxation approach to a broader variety of signal constraints.

\subsection{Constraints on Input Power}

First we examine a single constraint on the input power:
\[
\|u\|_2^2 \le p_u
\]
 and show in this special case that the relaxation approach finds a global optimum.

The power-constrained input design problem is to find
\begin{equation}
    \label{eq:gen_prob_power}
 v^\star:=   \max_{u\in \mathbb{R}^n, \|u\|_2^2 \le p_u} J\left(\begin{bmatrix}u'Q_{1,1}u&\hdots&u'Q_{1,N}u\\
      \vdots&\ddots&\vdots\\
      u'Q_{N,1}u&\hdots&u'Q_{N,N}u\end{bmatrix}\right).
\end{equation}

Using again the substitution $U=uu'$ we define the true constraint set, and the relaxed constraint set dropping the rank constraint:
\begin{eqnarray}
C_P &=& \{U\in S_n^+: \Tr U \le p_u, \rank(U)=1\},\notag \\
R_P &=& \{U\in S_n^+: \Tr U \le  p_u\},\notag
\end{eqnarray}
and then (\ref{eq:gen_prob_power}) is equivalent to:
\begin{equation}
    \label{eq:nonconv_opt_power}
   v^\star= \max_{U\in C_P} J\left(\begin{bmatrix}\Tr(UQ_{1,1})&\hdots&\Tr(UQ_{1,N})\\
      \vdots&\ddots&\vdots\\
      \Tr(UQ_{N,1})&\hdots&\Tr(UQ_{N,N})\end{bmatrix}\right),
\end{equation}
and the relaxed problem is
\begin{equation}
    \label{eq:relax_power}
   v_R:= \max_{U\in R_P} J\left(\begin{bmatrix}\Tr(UQ_{1,1})&\hdots&\Tr(UQ_{1,N})\\
      \vdots&\ddots&\vdots\\
      \Tr(UQ_{N,1})&\hdots&\Tr(UQ_{N,N})\end{bmatrix}\right).
\end{equation}

For this special case it happens that the relaxation actually attains the optimal value. This theorem is analogous to results on the ``hidden convexity'' of trust-region optimization problems (see, e.g., \cite{BenTal96} and many others).

\

\begin{theorem}\label{thm:power}
Let $U^\star$ be a solution of the convex optimization (\ref{eq:relax_power}). Take $\hat u$ to be the eigenvector corresponding to any nonzero eigenvalue of $U^\star$, e.g. the largest eigenvalue. Then $\hat u$ achieves the global optimum of the nonconvex optimization (\ref{eq:gen_prob_power}).
\end{theorem}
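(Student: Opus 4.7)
The plan is to prove the exactness of the SDP relaxation, $v^\star = v_R$, after which the theorem follows immediately: the nonzero-eigenvalue eigenvector of any rank-one optimum of (\ref{eq:relax_power}), suitably rescaled, is a feasible point of (\ref{eq:gen_prob_power}) attaining the relaxed value. The trivial direction $v^\star \le v_R$ comes from $C_P \subset R_P$; the substance is the matching lower bound $v^\star \ge v_R$. This is the ``hidden convexity'' phenomenon for single-constraint quadratically-constrained problems, parallel to the classical trust-region result cited in the text.

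First I would invoke Lagrangian duality for the concave SDP. Introducing multiplier $\mu \ge 0$ for the lone nontrivial constraint $\Tr U \le p_u$, Slater's condition holds (the interior of $R_P$ is nonempty), so strong duality applies. At any optimum $U^\star$ with dual variable $\mu^\star$, the KKT conditions read
\[
\nabla_U\bigl(J\circ \bar I_\theta\bigr)(U^\star) + \Lambda = \mu^\star I, \quad \Lambda \ge 0, \; \Lambda U^\star = 0, \; \mu^\star(\Tr U^\star - p_u)=0,
\]
so the gradient $G := \nabla_U(J\circ \bar I_\theta)(U^\star)$ satisfies $G \le \mu^\star I$ with equality on the range of $U^\star$. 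Any unit eigenvector $v$ of $U^\star$ corresponding to a nonzero eigenvalue therefore lies in the top eigenspace of $G$, and $\hat u := \sqrt{p_u}\,v$ defines a rank-one feasible point $U^\dagger := \hat u \hat u' = p_u\, vv' \in R_P$. The concavity tangent-plane bound at $U^\star$ yields
\[
J(\bar I_\theta(U^\dagger)) \le v_R + \langle G,\, U^\dagger - U^\star\rangle = v_R,
\]
using $\langle G, U^\dagger\rangle = p_u \mu^\star$ (from $Gv = \mu^\star v$) together with $\langle G, U^\star\rangle = \mu^\star \Tr U^\star = \mu^\star p_u$ (from the KKT identity and the active trace constraint). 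To close this to equality I would exploit the single-constraint structure: the affine slice $\{U \in R_P : \bar I_\theta(U) = \bar I_\theta(U^\star)\}$ is a level set of the (image-only-dependent) objective at value $v_R$, and the hidden-convexity argument familiar from the trust-region literature guarantees a rank-one representative in this slice. Feasibility of $\hat u$ for (\ref{eq:gen_prob_power}) then gives $v^\star \ge J(\bar I_\theta(\hat u \hat u')) = v_R$.

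The main obstacle is this rank-one reduction, since concavity alone provides only the one-sided tangent bound, and the classical Pataki-Barvinok rank-reduction theorem is stated for linear SDP objectives. I expect the cleanest completion is via an epigraph reformulation $\{(U,t):t\le J(\bar I_\theta(U)),\,U\in R_P\}$ which linearizes the objective; the resulting linear SDP has only the single nontrivial linear constraint plus a PSD epigraph constraint, and Pataki's bound then forces a rank-one $U$. As a cross-check, for E-optimality one can verify exactness directly via the saddle point $(u^\star, w^\star)$ of $\max_u \min_w u' Q(w) u$ with $Q(w) := \sum_{ij} w_i w_j Q_{ij}$: the saddle $u^\star$ is $\sqrt{p_u}$ times the top eigenvector of $Q(w^\star)$ at the dual-optimal direction $w^\star$, which is precisely the eigenvector of the rank-one relaxed optimum.
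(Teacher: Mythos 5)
Your first step is sound and in fact parallels the paper's: the paper writes the concave $J$ as a pointwise minimum of affine functions $h(i)+\Tr(H(i)U)$ and works with the minorant active at the optimum, while you work with the (super)gradient $G$ of $J\circ\bar I_\theta$ at $U^\star$; these are the same object, and both arguments correctly conclude that every nonzero-eigenvalue eigenvector of $U^\star$ lies in the top eigenspace of that linear functional, so that $U^\dagger=p_u vv'$ also maximizes the \emph{linearized} objective $\langle G,U\rangle$ over $R_P$ with value $\mu^\star p_u$. The genuine gap --- which you honestly flag --- is the passage from ``$U^\dagger$ maximizes the linearization'' to ``$U^\dagger$ maximizes $J\circ\bar I_\theta$ itself,'' i.e.\ $J(\bar I_\theta(U^\dagger))=v_R$ rather than merely $\le v_R$. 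Concavity gives only the one-sided tangent bound, exactly as you note, and your proposed repair via the epigraph reformulation plus Pataki's rank theorem does not close it: Pataki's bound $r(r+1)/2\le m$ requires the problem to be a \emph{linear} SDP, and the epigraph constraint $t\le J(\bar I_\theta(U))$ is not a single scalar linear constraint for any of the criteria in the paper. For E-optimality it is the $N\times N$ LMI $\bar I_\theta(U)\ge tI$, contributing on the order of $N(N+1)/2$ linear couplings; for D- and A-optimality the standard semidefinite representations introduce auxiliary matrix variables and further constraints. In all cases Pataki then permits optimal ranks comparable to $N$, not rank one, so the argument does not go through as written. (A further, smaller issue: even a successful Pataki argument would only show \emph{existence} of a rank-one optimizer, whereas the theorem asserts that an eigenvector of an \emph{arbitrary} optimal $U^\star$ works.)

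For comparison, the paper closes this step by diagonalizing $H(i_C)$ and observing that both $\max_{U\in C_P}\Tr(H(i_C)U)$ and $\max_{U\in R_P}\Tr(H(i_C)U)$ equal $p_u\lambda_{\max}(H(i_C))$, so the linear subproblem loses nothing under relaxation; it then asserts that the rank-one maximizer of this linear subproblem is optimal for the full concave problem. Whether one phrases the linearization via the active affine minorant (the paper) or the KKT gradient (you), the remaining work is identical: one must argue that at $U^\dagger$ the \emph{same} minorant/gradient is still active, i.e.\ $J(\bar I_\theta(U^\dagger))=h(i_C)+\Tr(H(i_C)U^\dagger)$, which does not follow from concavity alone. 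Your E-optimality cross-check via the saddle point of $\max_u\min_w u'Q(w)u$ is the right kind of argument to supply this missing piece for that criterion, but as proposed the general case remains open; you have correctly located the crux of the theorem without resolving it.
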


\

\noindent {\em Proof of Theorem \ref{thm:power}:}

Consider the ``true'' optimization over $C$:
\[
U^\star = \arg\max_{U\in C} \min_i[h(i)+\Tr(H(i)U)]
\]
Fix $i_C$ to be the index at which this optimum exists, then we can consider equivalently
\[
\max_{U\in C}\Tr(H(i_C)U) = \max_{u\in \mathbb R^n, u'u 
\le 1} u'H(i_C)u
\]
Since $H(i_C)\ge 0$ we can take the eigendecomposition of $H(i_C)=V\Lambda V'$ with $VV'=I$ is orthogonal and consider the change of variables $z = V'u$. Then we have the following equivalent optimization problem:
\[
\max_{z\in \mathbb R^n, z'VV'z 
\le 1} z'\Lambda z = \max_{z\in \mathbb R^n, z'z 
\le 1} z_i^2 \Lambda_{ii}.
\]
The maximum value of this optimization is the largest diagonal element of $\Lambda$, i.e. the largest eigenvalue of $H(i_C)$. Now, consider the same optimization over the relaxed feasible set under the change of variables $Z = V'UV$:
\begin{eqnarray}
\max_{Z \in R} \Tr(H(i_C)VZV')] &=& \max_{Z \in R} \Tr(V'H(i_C)VZ)]\notag \\
& =& \max_{Z \in R} \Tr(\Lambda Z)]\notag \\
& =& \max_{Z \in R} \sum_i \Lambda_{i,i}Z_{i,i}\label{eqn:Zopt}
\end{eqnarray}
It is clear that optimizing solution $Z$ is a diagonal matrix with rank at most $k$, where $k$ is the multiplicity of the largest eigenvalue of $\Lambda$ (and hence $H(i_C)$),  and this eigenvalue is the maximum value of the optimization (\ref{eqn:Zopt}). It follows that if $u$ is any eigenvector $H(i_C)$ corresponding to this eigenvalue, then
\begin{eqnarray}
u'H(i_C)u &=& \max_{U\in C_P}\Tr(H(i_C)U)\notag \\ &=&  \max_{U\in R_P}\Tr(H(i_C)U) \notag
\end{eqnarray}
and any solution of the relaxed problem 
\[
U^\star = \arg\max_{U \in R} \min_i[h(i)+\Tr(H(i)U)]
\]
is a matrix of rank at most $k$. Take an orthogonal eigendecomposition of $U$ and any eigenvector corresponding to a nonzero eigenvalue is a solution of the optimization problem (\ref{eq:gen_prob_power}). 
This completes the proof of the Theorem

\subsection{General Constraints on Input and Output Signals}

In this section we consider optimizing the information matrix subject to more general power and asymmetric amplitude constraints on both input and output. It will be shown that similar relaxation methods can be directly applied.

Suppose we have an input and output power constraints:
\[
\|u\|_2^2\le p_u, \ \|y\|_2^2 \le p_y
\]
as well as possibly asymmetric and time-varying constraints on the input and output:
\[
u_{\min ,i}\le u_i \le u_{\max ,i},  \ y_{\min ,i}\le y_i \le y_{\max ,i},
\]
for all $i = 1, 2, ..., n$.

Let $G_i$ be the $i^{th}$ row of $G$, the Toeplitz matrix representing the system $u\rightarrow y$, generated similarly to \ref{eqn:toep} with the impulse response of $\mathcal{G}$. Then with decision variables $U\in S_n^+$ and $\bar u \in \mathbb R^N$ the above constraints can be put in matrix form:
\begin{eqnarray}
\Tr(U)&\le & p_u,\notag\\
\Tr(UG'G)&\le & p_y,\notag\\
U_{ii}-\bar u_iu_{\max i}-\bar u_iu_{\min , i}&\le &-u_{\max , i}u_{\min , i},\notag\\
\Tr(UG_i'G_i)-G_i\bar uy_{\max , i}-G_i\bar uy_{\min , i}&\le &-y_{\max , i}y_{\min , i},\notag\\
U &=& \bar u \bar u'\notag.
\end{eqnarray}
where the amplitude constraints are enforced for all $i = 1, 2, ..., n$. In this case, the constraint $U = \bar u \bar u'$ is nonconvex, however it can be relaxed to the convex constraint $U \ge \bar u \bar u'$, which can be represented in Schur complement form
\[
\begin{bmatrix}
U&\bar u\\ \bar u' &1\end{bmatrix}\ge 0.
\]
With this relaxation, we again have a convex constraint set in the variables $U$ and $\bar u$.

Note that the output constraints here are for the noise-free response of the nominal model. If output constraints are strict, a conservative bound may be appropriate to account for disturbances and model uncertainty.

Having found a feasible solution of the relaxed problem, a candidate input can be chosen via a randomized strategy:
\[
\hat u = \bar u +\alpha D'\xi
\]
where $D'D=U-\bar u\bar u'$, $\xi\in \mathbb R^n$ is sampled from a normal distribution, and $\alpha\ge 0$ is a scaling parameter chosen as large as possible such that $\hat u$ satisfies all the constraints. This randomized solution could be further improved using a local optimization as in \cite{Suzuki07}. 

One might ask if uniform approximation accuracy bounds can be found as in the case of input amplitude constraints. This is unlikely, since it is known that for semidefinite relaxations of quadratic maximization subject to multiple ellipsoidal constraints, even in the case where the ellipsoids have a common center, the quality of the bound degrades logarithmically in the number of constraints \cite{Nemirovski99}. In the general case described in this section there are $2n+2$ constraints, which will be large for typical input design problems, and the provable optimality bounds are not very optimistic. However, the author has found this method to be effective in practice. This will be further explored in future publications.

\section{Illustrative Examples}\label{sec:examples}

In this section we show some results on a simple illustrative example:
\begin{equation}\label{eqn:example}
\mathcal G(q) = \frac{b}{q^2+a_1q+a_2}, \ \mathcal H = 1,
\end{equation}
with $b=0.1, a_1 = -1.8, a_2 = 0.9$. The step response of this system is shown in Figure \ref{fig:step}. We applied the proposed algorithm with a constraint that $|u(t)|\le 1$ for all $t$ and a time length of 100 samples. Note that experiment length is quite short in comparison to the transient dynamics of the system, so asymptotic arguments used in frequency-domain input design may not apply. The objective function $J(\cdot) := \det (\cdot)^{1/n}$ was used. The upper bound on the objective computed via the relaxation is 1.82$\times 10^4$.

\begin{figure}
\begin{center}
\includegraphics[width=0.8\columnwidth]{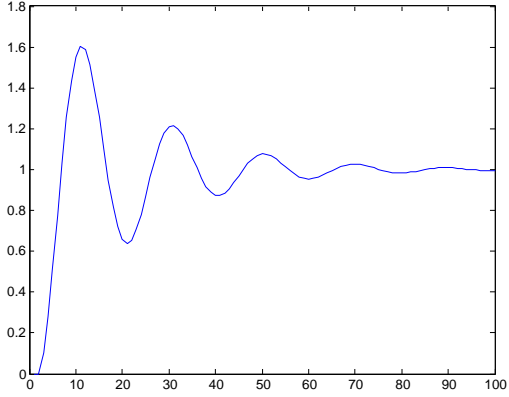}
\caption{Step response of the example system (\ref{eqn:example}).}
\label{fig:step}
\end{center}
\end{figure}

To illustrate that the randomized search for a feasible solution (see Section \ref{sec:feas}) is in some sense ``intelligent'', we generated 50,000 candidate sequences using (\ref{eqn:random}), and computed the information matrix  and the resulting objective  value. The same process was performed for a purely random binary sequence, i.e. $\sgn(\xi)$ where $\xi$ is a Gaussian white noise sequence. Figure \ref{fig:hist} shows a histogram of the ratios of objective functions these inputs to the upper bound $v_R$ computed via the relaxation. It can be seen that almost all signals generated using the proposed approach are of high quality and one could sample far fewer random candidates, although the testing candidates is not computationally expensive. In contrast, purely random binary sequences give substantially worse results. It is clear that the relaxation-based strategy biases the randomization heavily towards good inputs.

The best value achieved for the objective was 1.54$\times 10^4$. The approximation ratio of the feasible solution to the upper bound is 0.85, significantly better than the $2/\pi \approx 0.64$ which is guaranteed by Theorem \ref{relaxation_bound}. Note that we do not know what the true optimal value $v^\star$ is, only that it is between best feasible input found and the upper bound $v_R$. Therefore the feasible input we have found may in fact be closer to the global optimum than the ratio 0.85 suggests.

The response of the system to the best input is plotted in Figure \ref{fig:example}. The usefulness of the input design procedure was evaluated by comparing it to a pseudo-random binary sequence (PRBS) with the same amplitude constraints -- a frequently-used input pattern for system identification \cite{LjungBook}. A zero-mean Gaussian noise of variance 0.01 was added to $y$, and the output-error method in MATLAB System Identification Toolbox was applied. This test was performed for 500 times for each input signal with different noise patterns. Statistics of the resulting estimations are shown in Table \ref{error_tab}. It is clear that substantially better parameter estimates are found with the proposed relaxation method.

\begin{figure}
\begin{center}
\includegraphics[width=0.85\columnwidth]{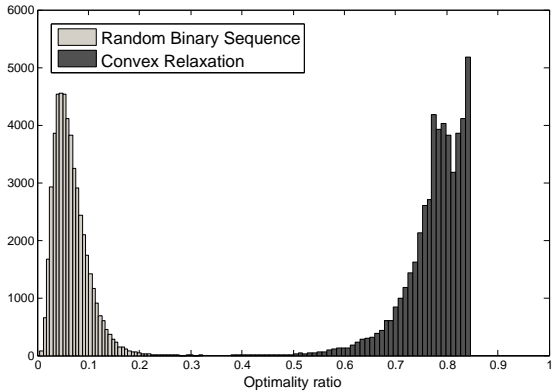}
\caption{Histogram of estimated optimality ratio of random candidate signals generated with (\ref{eqn:random}) as well as purely random binary sequences. A value of one would prove global optimality.}
\label{fig:hist}
\end{center}
\end{figure}

\begin{figure}
\begin{center}
\includegraphics[width=0.85\columnwidth]{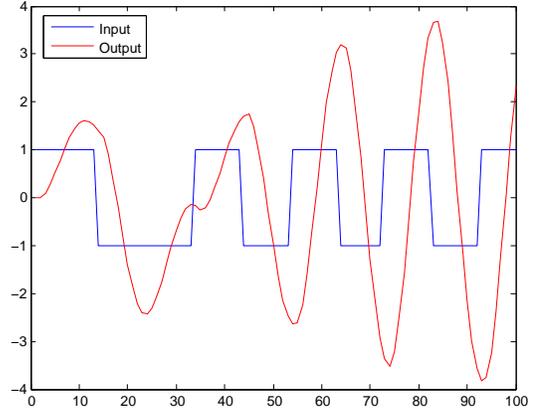}
\caption{Response of the example system (\ref{eqn:example}) to the input signal found via the proposed method.}
\label{fig:example}
\end{center}
\end{figure}

\begin{table}
\begin{center}
\begin{tabular}{| l || c | c | c | }
  \hline	
  & $a_1$ & $a_2$ & $b$\\
\hline\hline
  True & -1.8 & 0.9 & 0.1 \\ \hline
  Mean (Opt) & -1.800 &0.899 & 0.10 \\
  Std. Dev. (Opt) & 1.7$\times 10^{-3}$ & 1.7$\times 10^{-3}$  &1.1$\times 10^{-3}$ \\ \hline
  Mean (PRBS) & -1.499 & 0.810 &0.0234 \\
  Std. Dev. (PRBS) & 0.789 & 0.383 &0.044 \\
  \hline 
\end{tabular}
\caption{Parameter estimation comparison for the proposed optimization method and pseudo-random binary sequence (PRBS) on example system (\ref{eqn:example}).}
\label{error_tab}
\end{center}
\end{table}

\section{Conclusions}\label{sec:conc}

We have proposed a new framework for design of input signals for system identification in the time domain. There are many potential applications in industrial processes, biomedical modelling, communication systems, etc. The time-domain input design problem is highly nonconvex. We propose using a convex relaxation based on semidefinite programming. 

For the case of a constraint on input amplitude, we have proven that the relaxation provides an upper bound which is accurate to within $2/\pi$. Furthermore we have provided a randomized strategy for finding a feasible solution which has a similar bound in terms of expected value of a reduced information matrix. For the case of a power constraint on the input, our method provides the true global optimum. It can also be extended in a natural way to constraints on the output, as well as multi-input multi-output and time-varying systems.

A simple example illustrates the utility of the method, with the proposed strategy finding inputs which are far better than random binary sequences, resulting in substantial improvements in parameter estimates.

As a direction for future work, it is well-known that a model which is optimal in terms of its parameter estimates may not be best for describing the response of the system under feedback control (see, e.g., \cite{Gevers96, bombois}) or long term open-loop simulation (see, e.g., \cite{Farina10, Tobenkin10}). It will be interesting to explore the application of the proposed method to problems of identification for control and simulation.

\section{Acknowledgments}
The author is indebted to Alexandre Megretski for many enlightening discussions.

\bibliographystyle{IEEEtran}
\bibliography{sysid}

\end{document}